\theoremstyle{plain}
\theoremstyle{definition}
\newtheorem{theorem}{Theorem}[section]
\newtheorem{lemma}[theorem]{Lemma}
\newtheorem{corollary}[theorem]{Corollary}
\newtheorem{convention}[theorem]{Convention}
\newtheorem{remark}[theorem]{Remark}
\theoremstyle{remark}
\numberwithin{equation}{section}
\newcommand{\SP}{\: \: \: \: \:}
\title[Eigenvalues of weighted generalized shifts]{Eigenvalues of weighted generalized shifts over direct products of vector spaces}
\author[S. Arzanesh, F. Ayatollah Zadeh Shirazi, A. Hosseini, R. Rezavand]{Safoura Arzanesh, Fatemah Ayatollah Zadeh Shirazi, \\ Arezoo Hosseini, Reza Rezavand}
\begin{document}
\begin{abstract}
In the following text for vector space $V$ over field $F$ we compute all eigenvalues of weighted generalized shift $\sigma_{\varphi,\mathfrak{w}}:V^\Gamma\to V^\Gamma$
(and generalized shift $\sigma_\varphi:V^\Gamma\to V^\Gamma$)
for nonempty set $\Gamma$, weight vector $\mathfrak{w}\in F^\Gamma$ and self--map $\varphi:\Gamma\to \Gamma$.
\end{abstract}
\maketitle
\noindent {\small {\bf 2020 Mathematics Subject Classification:}  15A18  \\
{\bf Keywords:}} Eigenvalue, Weighted composition operator, Weighted generalized shift.
\section{Introduction}
\noindent Computing eigenvalues of linear maps is one of
favourites of a large group of mathematicians (see e.g., J. von
Neumann;s work in 1928~\cite{neumann}). On the other hand
generalized shifts has been introduced for the first time in
\cite{karami} as a generalization of one--sided and two--sided
shifts, moreover weighted generalized shifts has been introduced
as a common generalization of generalized shifts and weighted
operators, in \cite{ebrahimi}, however in brief words it is
simply just weighted composition operator~\cite{jeang} (for study
of spectrum in weighted composition operators one may note
\cite{GUNATILLAKE} too).
\\
Our main aim in the following text is to compute all eigenvalues
of a weighted generalized shift over a direct product of linear
vector spaces.
\begin{convention}
In the following text $V$ is a linear vector space over field $F$, $\varphi:\Gamma\to\Gamma$ is an arbitrary self--map on nonempty set $\Gamma$, and $\mathfrak{w}=(\mathfrak{w}_\alpha)_{\alpha\in\Gamma}\in
F^\Gamma$ (weight vector).
So one may consider our targets of study, the following linear maps:
\begin{itemize}
\item generalized shift
\[\sigma_\varphi:V^\Gamma\to V^\Gamma\SP,\SP
(x_\alpha)_{\alpha\in\Gamma}\mapsto(x_{\varphi(\alpha)})_{\alpha\in\Gamma}\]
\item weighted generalized shift
\[\sigma_{\varphi,\mathfrak{w}}:V^\Gamma\to V^\Gamma\SP,\SP(x_\alpha)_{\alpha\in\Gamma}\mapsto(\mathfrak{w}_\alpha x_{\varphi(\alpha)})_{\alpha\in\Gamma}(=\mathfrak{w} \sigma_\varphi((x_\alpha)_{\alpha\in\Gamma}))\:.\]
\end{itemize}
For $x=(x_\alpha)_{\alpha\in\Gamma}\in V^\Gamma,\mathfrak{v}=(\mathfrak{v}_\alpha)_{\alpha\in\Gamma}\in F^\Gamma$ and nonempty subset $D$  of
$\Gamma$ let $x^D:=(x_\alpha)_{\alpha\in D},\mathfrak{v}^D:=(\mathfrak{v}_\alpha)_{\alpha\in D}$. Also let:
\[\mathfrak{Z}:=\{\alpha\in\Gamma:\mathfrak{w}_\alpha=0\}\SP,\SP\downarrow\mathfrak{Z}=\bigcup\{\varphi^{-n}(
\mathfrak{Z}):n\geq0\}\:.\]
Let's denote zero of $F$ and zero of $V$ simply by ``$0$", also  $\mathbf{0}:=(0)_{\alpha\in\Gamma}$
is zero of $V^\Gamma$. For vector space $W$ over field $F$ and linear map $T:W\to W$ suppose $Eigen(T,W)$
is the collection of all eigenvalues of $T$, i.e., all $r\in F$ such that there exists nonzero vector $x\in W$ with $T(x)=rx$.
\end{convention}
\section{Eigenvalues of $\sigma_{\varphi,\mathfrak{w}}:V^\Gamma\to V^\Gamma$}
\noindent In this section we compute all eigenvalues of $\sigma_{\varphi,\mathfrak{w}}:V^\Gamma\to V^\Gamma$.
For this aim let's classify points of an arbitrary set $A$ with respect to self--map $f:A\to A$. We say $a\in A$:
\begin{itemize}
\item is a wandering point of $f$, if $\{f^n(a)\}_{n\geq1}$ is a one--to--one (infinite) sequence,
\item is a quasi--periodic or non--wandering point, if there exists $n>m\geq1$ with $f^n(a)=f^m(a)$,
\item is a periodic point if there exists $n\geq1$ with $f^n(a)=a$.
\end{itemize}
We denote
\begin{itemize}
\item the collection of all wandering points of $f$ by $W(f)$,
\item the collection of all quasi--periodic points of $f$ by $Q(f)$,
\item the collection of all periodic points of $f$ by $P(f)$.
\end{itemize}
For $\alpha\in P(f)$, let
$per(\alpha)=\min\{n\geq1:f^n(\alpha)=\alpha\}$.
\\
Let $\thicksim:=\{(\alpha,\beta)\in\Gamma\times\Gamma:\exists n,m\geq1\SP\varphi^n(\alpha)=\varphi^m(\beta)\}$, then
$\thicksim$ is an equivalence relation on $\Gamma$ (for $\alpha\in\Gamma$, $\frac{\alpha}{\thicksim}=\{\beta\in\Gamma:
\alpha\thicksim\beta\}$ is the equivalence class of $\alpha$ with respect to $\thicksim$, and
$\frac{\Gamma}{\thicksim}=\{\frac{\beta}{\thicksim}:\beta\in\Gamma\}$).
\\
Henceforth consider $\mathfrak{g}\in \Gamma$ and let:
\[\begin{array}{l}
\Gamma_\mathfrak{g}:=\dfrac{\mathfrak{g}}{\thicksim} \:, \\
\varphi_\mathfrak{g}:=\varphi\restriction_{\Gamma_\mathfrak{g}}:\Gamma_\mathfrak{g}\to \Gamma_\mathfrak{g}\:, \\
\thicksim_\mathfrak{g}:=\thicksim\cap(\Gamma_\mathfrak{g}\times\Gamma_\mathfrak{g})(=\Gamma_\mathfrak{g}\times\Gamma_\mathfrak{g})\:,  \\
\mathfrak{Z}_\mathfrak{g};=\mathfrak{Z}\cap \Gamma_\mathfrak{g}(=\{\alpha\in \Gamma_\mathfrak{g}:\mathfrak{w}_\alpha=0\})\:, \\
\downarrow\mathfrak{Z}_\mathfrak{g}:=\downarrow\mathfrak{Z}\cap \Gamma_\mathfrak{g}(=
\bigcup\{\varphi_\mathfrak{g}^{-n}(\mathfrak{Z}_\mathfrak{g}):n\geq0\})\:.
\end{array}\]
\begin{remark}\label{salam10}
We have
$ker(\sigma_{\varphi,\mathfrak{w}})=\{(x_\alpha)_{\alpha\in\Gamma}\in
V^\Gamma:\forall\alpha\in
\varphi(\Gamma\setminus\mathfrak{Z})\:\: x_\alpha=0\}$.
\\
Hence $\sigma_{\varphi,\mathfrak{w}}:V^\Gamma\to V^\Gamma$ is
one--to--one if and only if
$\Gamma=\varphi(\Gamma\setminus\mathfrak{Z})$. It is well-known
that $0\in {\rm Eigen}(\sigma_{\varphi,\mathfrak{w}},V^\Gamma)$
if and only if $\sigma_{\varphi,\mathfrak{w}}:V^\Gamma\to
V^\Gamma$ is not one--to--one. Hence $0\in {\rm
Eigen}(\sigma_{\varphi,\mathfrak{w}},V^\Gamma)$ if and only if
$\varphi(\Gamma\setminus\mathfrak{Z})\neq\Gamma$.
\end{remark}
\begin{lemma}\label{salam20}
If $\downarrow{\mathfrak Z}=\Gamma$, then:
\[{\rm Eigen}(\sigma_{\varphi,\mathfrak{w}},V^\Gamma)\subseteq\{0\}\:.\]
Moreover in the above case by Remark~\ref{salam10}, ${\rm Eigen}(\sigma_{\varphi,\mathfrak{w}},V^\Gamma)=\varnothing$ if $\varphi(\Gamma\setminus\mathfrak{Z})=\Gamma$
and ${\rm Eigen}(\sigma_{\varphi,\mathfrak{w}},V^\Gamma)=\{0\}$ otherwise.
\end{lemma}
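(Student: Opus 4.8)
The plan is to split the statement into its two assertions. The ``moreover'' part is an immediate consequence of Remark~\ref{salam10} once the inclusion ${\rm Eigen}(\sigma_{\varphi,\mathfrak{w}},V^\Gamma)\subseteq\{0\}$ is in hand: if $\varphi(\Gamma\setminus\mathfrak{Z})=\Gamma$ then $0\notin{\rm Eigen}(\sigma_{\varphi,\mathfrak{w}},V^\Gamma)$, so the inclusion forces ${\rm Eigen}(\sigma_{\varphi,\mathfrak{w}},V^\Gamma)=\varnothing$; and if $\varphi(\Gamma\setminus\mathfrak{Z})\neq\Gamma$ then $0\in{\rm Eigen}(\sigma_{\varphi,\mathfrak{w}},V^\Gamma)$, so the inclusion forces equality with $\{0\}$. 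Thus the whole content reduces to the inclusion, which I would prove by contradiction.

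So suppose $r\in F$ is a nonzero eigenvalue, with eigenvector $x=(x_\alpha)_{\alpha\in\Gamma}\neq\mathbf{0}$; fix $\beta\in\Gamma$ with $x_\beta\neq0$. The defining relation $\sigma_{\varphi,\mathfrak{w}}(x)=rx$, read off in the $\alpha$-th coordinate, is
\[\mathfrak{w}_\alpha x_{\varphi(\alpha)}=rx_\alpha\qquad(\alpha\in\Gamma)\:.\]
First I would specialise this to the forward orbit of $\beta$, writing $y_k:=x_{\varphi^k(\beta)}$ and $w_k:=\mathfrak{w}_{\varphi^k(\beta)}$ for $k\geq0$, which turns the relation into the one--step recurrence $w_k y_{k+1}=r y_k$. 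Since $r\neq0$ this gives $y_k=\tfrac{w_k}{r}\,y_{k+1}$, and telescoping yields $y_0=r^{-n}w_0w_1\cdots w_{n-1}\,y_n$ for every $n\geq1$.

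Next I would invoke the hypothesis $\downarrow\mathfrak{Z}=\Gamma$: since $\beta\in\Gamma=\downarrow\mathfrak{Z}=\bigcup_{n\geq0}\varphi^{-n}(\mathfrak{Z})$, there is some $n\geq0$ with $\varphi^n(\beta)\in\mathfrak{Z}$, i.e.\ $w_n=0$. Feeding $w_n=0$ into the recurrence $w_n y_{n+1}=r y_n$ and using $r\neq0$ forces $y_n=x_{\varphi^n(\beta)}=0$ (this covers $n=0$ directly, and for $n\geq1$ the telescoped identity then gives $y_0=0$ as well). Either way $x_\beta=y_0=0$, contradicting the choice of $\beta$. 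Hence no nonzero eigenvalue exists, which is exactly ${\rm Eigen}(\sigma_{\varphi,\mathfrak{w}},V^\Gamma)\subseteq\{0\}$.

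The argument is a short finite telescoping along a single forward orbit, so I do not anticipate a genuine obstacle; the only points needing care are the bookkeeping for the degenerate case $n=0$ (where $\beta$ itself already lies in $\mathfrak{Z}$) and the fact that everything uses $r\neq0$ in an essential way — it is precisely the invertibility of $r$ that both isolates the vanishing at $\varphi^n(\beta)$ and lets that vanishing propagate back along the orbit to $\beta$.
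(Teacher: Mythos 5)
Your proposal is correct and follows essentially the same route as the paper: both arguments pick a coordinate $\beta$ (the paper's $\theta$) with $x_\beta\neq0$, use $\downarrow\mathfrak{Z}=\Gamma$ to find $n\geq0$ with $\mathfrak{w}_{\varphi^n(\beta)}=0$, and propagate the eigenvalue relation along the forward orbit — your telescoped recurrence $y_0=r^{-n}w_0\cdots w_{n-1}y_n$ is exactly the coordinate form of the paper's iterate $\sigma_{\varphi,\mathfrak{w}}^{n+1}(x)=r^{n+1}x$, which gives $\mathfrak{w}_\beta\cdots\mathfrak{w}_{\varphi^n(\beta)}x_{\varphi^{n+1}(\beta)}=r^{n+1}x_\beta$ and hence $r=0$. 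The only cosmetic difference is that you argue by contradiction while the paper directly concludes $r=0$, and your handling of the ``moreover'' clause via Remark~\ref{salam10} matches the paper's.
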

\begin{proof}
Suppose $r\in {\rm Eigen}(\sigma_{\varphi,\mathfrak{w}},V^\Gamma)$, then there exists
$x=(x_\alpha)_{\alpha\in\Gamma}\in V^\Gamma$ with $x\neq\mathbf{0}$ and
$\sigma_{\varphi,\mathfrak{w}}(x)=rx$. There exists $\theta\in\Gamma$ with $x_\theta\neq0$.
By $\theta\in\Gamma=\downarrow{\mathfrak Z}$ there
exists $n\geq0$ with $\mathfrak{w}_{\varphi^n(\theta)}=0$. Hence:
\begin{eqnarray*}
\sigma_{\varphi,\mathfrak{w}}(x)=rx & \Rightarrow & \sigma_{\varphi,\mathfrak{w}}^{n+1}(x)=r^{n+1}x \\
& \Rightarrow & \mathfrak{w}_\theta\mathfrak{w}_{\varphi(\theta)}\cdots\mathfrak{w}_{\varphi^n(\theta)}
    x_{\varphi^{n+1}(\theta)}=r^{n+1}x_\theta \\
& \Rightarrow & r^{n+1} x_\theta=0 \\
& \Rightarrow & r=0\:.
\end{eqnarray*}
\end{proof}
\begin{corollary}\label{salam25}
If $\dfrac{\mathfrak{g}}{\thicksim}\subseteq\downarrow\mathfrak{Z}$, then
${\rm Eigen}(\sigma_{\varphi\restriction_{\frac{\mathfrak{g}}{\thicksim}},\mathfrak{w}^{\frac{\mathfrak{g}}{\thicksim}}},V^{\frac{\mathfrak{g}}{\thicksim}})\setminus\{0\}=\varnothing$.
\end{corollary}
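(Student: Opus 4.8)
The plan is to deduce this directly from Lemma~\ref{salam20}, applied not to all of $\Gamma$ but to the restricted system living on the single equivalence class $\Gamma_\mathfrak{g}=\frac{\mathfrak{g}}{\thicksim}$. To make this legitimate I must first check that $\Gamma_\mathfrak{g}$ is invariant in the appropriate sense, so that $\sigma_{\varphi_\mathfrak{g},\mathfrak{w}^{\Gamma_\mathfrak{g}}}:V^{\Gamma_\mathfrak{g}}\to V^{\Gamma_\mathfrak{g}}$ is itself a weighted generalized shift of exactly the form treated in Lemma~\ref{salam20}.

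First I would record that for every $\alpha\in\Gamma$ one has $\alpha\thicksim\varphi(\alpha)$ (take $n=2$, $m=1$ in the definition of $\thicksim$). Consequently $\Gamma_\mathfrak{g}$ is both forward invariant, $\varphi(\Gamma_\mathfrak{g})\subseteq\Gamma_\mathfrak{g}$, and backward invariant, $\varphi^{-1}(\Gamma_\mathfrak{g})\subseteq\Gamma_\mathfrak{g}$: indeed if $\varphi(\alpha)\in\Gamma_\mathfrak{g}$ then $\alpha\thicksim\varphi(\alpha)\thicksim\mathfrak{g}$, so $\alpha\in\Gamma_\mathfrak{g}$. Hence $\varphi_\mathfrak{g}=\varphi\restriction_{\Gamma_\mathfrak{g}}$ is a genuine self--map of $\Gamma_\mathfrak{g}$, and $\sigma_{\varphi_\mathfrak{g},\mathfrak{w}^{\Gamma_\mathfrak{g}}}$ is a weighted generalized shift on $V^{\Gamma_\mathfrak{g}}$ with weight vector $\mathfrak{w}^{\Gamma_\mathfrak{g}}\in F^{\Gamma_\mathfrak{g}}$ and zero set $\mathfrak{Z}_\mathfrak{g}$.

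Next I would verify that the hypothesis of the present statement is precisely the hypothesis of Lemma~\ref{salam20} for this restricted system. The quantity playing the role of $\downarrow\mathfrak{Z}$ for the pair $(\Gamma_\mathfrak{g},\varphi_\mathfrak{g})$ is $\bigcup_{n\geq0}\varphi_\mathfrak{g}^{-n}(\mathfrak{Z}_\mathfrak{g})$, which by backward invariance of $\Gamma_\mathfrak{g}$ coincides with $\downarrow\mathfrak{Z}\cap\Gamma_\mathfrak{g}=\downarrow\mathfrak{Z}_\mathfrak{g}$, exactly as asserted in the setup. The hypothesis $\frac{\mathfrak{g}}{\thicksim}\subseteq\downarrow\mathfrak{Z}$ then gives $\downarrow\mathfrak{Z}_\mathfrak{g}=\downarrow\mathfrak{Z}\cap\Gamma_\mathfrak{g}=\Gamma_\mathfrak{g}$; that is, the restricted system satisfies $\downarrow\mathfrak{Z}_\mathfrak{g}=\Gamma_\mathfrak{g}$, which is the analogue of the condition $\downarrow\mathfrak{Z}=\Gamma$ in Lemma~\ref{salam20}.

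Applying Lemma~\ref{salam20} to $\sigma_{\varphi_\mathfrak{g},\mathfrak{w}^{\Gamma_\mathfrak{g}}}:V^{\Gamma_\mathfrak{g}}\to V^{\Gamma_\mathfrak{g}}$ then yields ${\rm Eigen}(\sigma_{\varphi\restriction_{\frac{\mathfrak{g}}{\thicksim}},\mathfrak{w}^{\frac{\mathfrak{g}}{\thicksim}}},V^{\frac{\mathfrak{g}}{\thicksim}})\subseteq\{0\}$, and removing $0$ gives the desired $\setminus\{0\}=\varnothing$. The only genuine (and still minor) point requiring attention is the identification of the two a priori different meanings of ``$\downarrow\mathfrak{Z}$'' — the one computed inside $\Gamma_\mathfrak{g}$ using $\varphi_\mathfrak{g}$, versus its trace from the ambient $\Gamma$ — but this is settled once backward invariance of the equivalence class is in hand, so no serious obstacle remains.
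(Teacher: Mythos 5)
Your proof is correct and takes essentially the same route as the paper: both reduce the statement to Lemma~\ref{salam20} applied to the restricted weighted generalized shift $\sigma_{\varphi_\mathfrak{g},\mathfrak{w}^{\Gamma_\mathfrak{g}}}$ on $V^{\Gamma_\mathfrak{g}}$ after observing that the hypothesis forces $\downarrow\mathfrak{Z}_\mathfrak{g}=\Gamma_\mathfrak{g}$, with the invariance checks you spell out being exactly what the paper builds into its definitions of $\varphi_\mathfrak{g}$ and $\downarrow\mathfrak{Z}_\mathfrak{g}$ in the setup. (One cosmetic slip: the identification $\bigcup_{n\geq0}\varphi_\mathfrak{g}^{-n}(\mathfrak{Z}_\mathfrak{g})=\downarrow\mathfrak{Z}\cap\Gamma_\mathfrak{g}$ rests on \emph{forward} invariance of $\Gamma_\mathfrak{g}$ rather than backward invariance, but since you established both, nothing is missing.)
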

\begin{proof}
$\Gamma_\mathfrak{g}=\downarrow\mathfrak{Z}_{\mathfrak{g}}$ since
$\dfrac{\mathfrak{g}}{\thicksim}\subseteq\downarrow\mathfrak{Z}$. Now use
Lemma~\ref{salam20} for weighted generalized shift $\sigma_{\varphi_{\mathfrak g},\mathfrak{w}^{\Gamma_{\mathfrak g}}}:\Gamma_{\mathfrak g}\to\Gamma_{\mathfrak g}$.
\end{proof}
\begin{lemma}\label{salam30}
${\rm Eigen}(\sigma_{\varphi,\mathfrak{w}},V^\Gamma)=\bigcup\{{\rm Eigen}(\sigma_{\varphi\restriction_{\frac\alpha\thicksim},\mathfrak{w}^{\frac\alpha\thicksim}},V^{\frac\alpha\thicksim}):\alpha\in\Gamma\}$.
\end{lemma}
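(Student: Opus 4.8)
The plan is to exploit the fact that the equivalence relation $\thicksim$ partitions $\Gamma$ into $\varphi$-invariant blocks on which $\sigma_{\varphi,\mathfrak{w}}$ acts independently; the statement then reduces to a purely formal fact about eigenvalues of a direct product of linear maps. There is no genuine analytic content, and the result should hold with no hypothesis on $V$, $F$, $\Gamma$, $\varphi$ or $\mathfrak{w}$.

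First I would verify that each class $\frac{\alpha}{\thicksim}$ is forward invariant under $\varphi$, i.e.\ $\varphi(\frac{\alpha}{\thicksim})\subseteq\frac{\alpha}{\thicksim}$. This is immediate: for any $\beta\in\Gamma$ we have $\varphi^2(\beta)=\varphi^{1}(\varphi(\beta))\cdot$, more precisely $\varphi^2(\beta)=\varphi^2(\beta)$ shows (taking $n=2$, $m=1$ in the definition of $\thicksim$) that $\beta\thicksim\varphi(\beta)$, so $\varphi(\beta)\in\frac{\beta}{\thicksim}$. Consequently, if $\beta\in\frac{\alpha}{\thicksim}$ then $\varphi(\beta)\in\frac{\beta}{\thicksim}=\frac{\alpha}{\thicksim}$. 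Hence each restricted map $\varphi\restriction_{\frac{\alpha}{\thicksim}}$ is a genuine self--map of $\frac{\alpha}{\thicksim}$ and the operator $\sigma_{\varphi\restriction_{\frac{\alpha}{\thicksim}},\mathfrak{w}^{\frac{\alpha}{\thicksim}}}:V^{\frac{\alpha}{\thicksim}}\to V^{\frac{\alpha}{\thicksim}}$ is well defined.

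Next, since $\frac{\Gamma}{\thicksim}$ is a partition of $\Gamma$ into these invariant blocks, there is a canonical identification $V^\Gamma\cong\prod_{C\in\frac{\Gamma}{\thicksim}}V^{C}$ sending $x=(x_\alpha)_{\alpha\in\Gamma}$ to $(x^{C})_{C}$. The crucial observation is that for $\alpha$ lying in a block $C$, the $\alpha$--coordinate of $\sigma_{\varphi,\mathfrak{w}}(x)$ equals $\mathfrak{w}_\alpha x_{\varphi(\alpha)}$, and since $\varphi(\alpha)\in C$ by the invariance just established, this depends only on $x^{C}$. Thus under the above identification $\sigma_{\varphi,\mathfrak{w}}$ becomes the direct product $\prod_{C}\sigma_{\varphi\restriction_{C},\mathfrak{w}^{C}}$, acting coordinatewise on the blocks.

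Finally I would invoke the elementary principle that for a direct product of linear maps $T=\prod_{i}T_i$ on $W=\prod_i W_i$ one has $\mathrm{Eigen}(T,W)=\bigcup_i\mathrm{Eigen}(T_i,W_i)$. For the inclusion $\subseteq$, an eigenvector $x\neq\mathbf{0}$ for $r$ must be nonzero in some block $C$, and its restriction $x^{C}$ is then a nonzero eigenvector of $\sigma_{\varphi\restriction_{C},\mathfrak{w}^{C}}$ for the same $r$. For $\supseteq$, given a nonzero eigenvector $y\in V^{C}$ of $\sigma_{\varphi\restriction_{C},\mathfrak{w}^{C}}$ for $r$, I would extend it by $0$ on every other block to obtain a nonzero eigenvector of $\sigma_{\varphi,\mathfrak{w}}$ for $r$. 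I expect the only real content to lie in the invariance step of the second paragraph, since it is precisely what makes the operator block diagonal; once that is in place the eigenvalue identity is a direct consequence of the product structure.
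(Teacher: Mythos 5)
Your proposal is correct and follows essentially the same route as the paper: the paper's proof is exactly your two inclusions, restricting a nonzero eigenvector to a class $\frac{\theta}{\thicksim}$ where it has a nonzero coordinate, and extending a class eigenvector by zero to all of $\Gamma$. The only difference is presentational: you make explicit the $\varphi$--invariance of each class (via $\beta\thicksim\varphi(\beta)$, taking $n=2$, $m=1$) and package the argument as a block--diagonal/direct--product principle, whereas the paper leaves the invariance implicit in its definition of $\varphi_{\mathfrak{g}}$ and verifies the two inclusions directly.
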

\begin{proof}
First suppose $r\in {\rm Eigen}(\sigma_{\varphi,\mathfrak{w}})$, then there exists
$x=(x_\alpha)_{\alpha\in\Gamma}\in V^\Gamma$ such that $x\neq\mathbf{0}$ and
$\sigma_{\varphi,\mathfrak{w}}(x)=rx$. Choose $\theta\in\Gamma$ with $x_\theta\neq0$, then we have
$x^\frac\theta\thicksim\neq\mathbf{0}^\frac\theta\thicksim$ and:
\begin{eqnarray*}
\sigma_{\varphi,\mathfrak{w}}(x)=rx & \Rightarrow & (\forall\alpha\in\Gamma\:\:\: \mathfrak{w}_\alpha x_{\varphi(\alpha)}=
    rx_\alpha) \\
& \Rightarrow & (\forall\alpha\in\frac\theta\thicksim\:\:\: \mathfrak{w}_\alpha x_{\varphi(\alpha)}=rx_\alpha) \\
& \Rightarrow & \sigma_{\varphi\restriction_{\frac\theta\thicksim},\mathfrak{w}^\frac\theta\thicksim}(x^\frac\theta\thicksim)
    =rx^\frac\theta\thicksim \\
& \mathop{\Rightarrow}\limits^{x^\frac\theta\thicksim\neq\mathbf{0}^\frac\theta\thicksim} & r\in {\rm Eigen}(\sigma_{\varphi\restriction_{\frac\theta\thicksim},\mathfrak{w}^{\frac\theta\thicksim}},V^{\frac\theta\thicksim})
\end{eqnarray*}
hence ${\rm Eigen}(\sigma_{\varphi,\mathfrak{w}},V^\Gamma)\subseteq\bigcup\{{\rm Eigen}(\sigma_{\varphi\restriction_{\frac\alpha\thicksim},\mathfrak{w}^{\frac\alpha\thicksim}},V^{\frac\alpha\thicksim}):\alpha\in\Gamma\}$.
\\
Now, for $\beta\in\Gamma$ suppose $s\in {\rm Eigen}(\sigma_{\varphi\restriction_{\frac\beta\thicksim},\mathfrak{w}^{\frac\beta\thicksim}},V^{\frac\beta\thicksim})$, then there exists $z=(z_\alpha)_{\alpha\in\frac\beta\thicksim}$
with $z\neq\mathbf{0}^\frac\beta\thicksim$ and $\sigma_{\varphi\restriction_{\frac\theta\thicksim},\mathfrak{w}^{\frac\theta\thicksim}}(z)=sz$. Let:
\[y_\alpha:=\left\{\begin{array}{lc} z_\alpha\:, & \alpha\in\frac\theta\thicksim\:, \\ 0\:, & otherwise\:, \end{array}\right.\]
then for $y:=(y_\alpha)_{\alpha\in\Gamma}$ we have $y\neq\mathbf{0}$ and $\sigma_{\varphi,\mathfrak{w}}(y)=sy$
which leads to $s\in {\rm Eigen}(\sigma_{\varphi,\mathfrak{w}},V^\Gamma)$ and completes the proof.
\end{proof}
\begin{lemma}\label{salam40}
If $W(\varphi)\setminus \downarrow{\mathfrak Z}\neq\varnothing$, then:
\[F\setminus\{0\}\subseteq {\rm Eigen}(\sigma_{\varphi,\mathfrak{w}},V^\Gamma)\:.\]
Moreover in the above case by Remark~\ref{salam10}, ${\rm Eigen}(\sigma_{\varphi,\mathfrak{w}},V^\Gamma)=F\setminus\{0\}$ if $\varphi(\Gamma\setminus\mathfrak{Z})=\Gamma$ and
${\rm Eigen}(\sigma_{\varphi,\mathfrak{w}},V^\Gamma)=F$ otherwise.
\end{lemma}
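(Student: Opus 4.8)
The plan is to exhibit, for each nonzero $r\in F$, an explicit nonzero eigenvector. First I would use Lemma~\ref{salam30} to reduce to a single equivalence class: pick $\theta\in W(\varphi)\setminus\downarrow\mathfrak{Z}$ and set $\Gamma_\theta=\frac{\theta}{\thicksim}$; since ${\rm Eigen}(\sigma_{\varphi_\theta,\mathfrak{w}^{\Gamma_\theta}},V^{\Gamma_\theta})$ is one of the sets appearing in the union of Lemma~\ref{salam30}, it suffices to show $F\setminus\{0\}\subseteq{\rm Eigen}(\sigma_{\varphi_\theta,\mathfrak{w}^{\Gamma_\theta}},V^{\Gamma_\theta})$. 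I would then record two structural facts about the forward orbit $O=\{\varphi^n(\theta):n\ge 0\}$. Because $\theta$ is wandering, the points $\theta_n:=\varphi^n(\theta)$ are pairwise distinct; because $\theta\notin\downarrow\mathfrak{Z}$, we have $\mathfrak{w}_{\theta_n}\ne 0$ for all $n\ge 0$. Moreover every $\alpha\in\Gamma_\theta$ satisfies $\varphi^n(\alpha)\in O$ for some $n\ge1$ (this is exactly $\alpha\thicksim\theta$), so the minimal hitting time $n(\alpha):=\min\{n\ge0:\varphi^n(\alpha)\in O\}$ is well defined, and there is a unique index $m(\alpha)$ with $\varphi^{n(\alpha)}(\alpha)=\theta_{m(\alpha)}$.

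Fix $r\in F\setminus\{0\}$ and a nonzero $v\in V$. The idea is that, since $r\ne0$, the eigenvalue equation $\mathfrak{w}_\alpha x_{\varphi(\alpha)}=rx_\alpha$ can be solved backward along $\varphi$ as $x_\alpha=\frac{\mathfrak{w}_\alpha}{r}x_{\varphi(\alpha)}$. Accordingly I would first define $x$ on $O$ by $x_{\theta_m}:=\dfrac{r^m}{\mathfrak{w}_{\theta_0}\cdots\mathfrak{w}_{\theta_{m-1}}}\,v$ (empty product $=1$), which is forced by the recurrence $\mathfrak{w}_{\theta_m}x_{\theta_{m+1}}=rx_{\theta_m}$ and is legitimate because the weights on $O$ are nonzero, and then extend to all of $\Gamma_\theta$ by
\[x_\alpha:=\frac{\mathfrak{w}_\alpha\,\mathfrak{w}_{\varphi(\alpha)}\cdots\mathfrak{w}_{\varphi^{n(\alpha)-1}(\alpha)}}{r^{n(\alpha)}}\,x_{\theta_{m(\alpha)}}\:.\]

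To finish I would verify $\sigma_{\varphi_\theta,\mathfrak{w}^{\Gamma_\theta}}(x)=rx$, i.e. $\mathfrak{w}_\alpha x_{\varphi(\alpha)}=rx_\alpha$ for every $\alpha\in\Gamma_\theta$. For $\alpha\in O$ this is the defining recurrence of the $x_{\theta_m}$; for $\alpha\notin O$ one has $n(\alpha)\ge1$, and minimality of the hitting time gives $n(\varphi(\alpha))=n(\alpha)-1$ and $m(\varphi(\alpha))=m(\alpha)$, whence substituting the formula collapses the telescoping product to exactly $\mathfrak{w}_\alpha x_{\varphi(\alpha)}=rx_\alpha$. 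Since $x_\theta=v\ne0$, the vector $x$ is a nonzero eigenvector, so $r\in{\rm Eigen}$; as $r\ne0$ was arbitrary, $F\setminus\{0\}\subseteq{\rm Eigen}(\sigma_{\varphi,\mathfrak{w}},V^\Gamma)$. The ``moreover'' statement is then immediate: ${\rm Eigen}\subseteq F$ always, and by Remark~\ref{salam10} we have $0\in{\rm Eigen}$ exactly when $\varphi(\Gamma\setminus\mathfrak{Z})\ne\Gamma$, so the eigenvalue set is $F\setminus\{0\}$ when $\varphi(\Gamma\setminus\mathfrak{Z})=\Gamma$ and $F$ otherwise.

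I expect the main obstacle to be the bookkeeping in the verification step: one must check that the backward propagation is globally consistent even though a point of $O$ may have many $\varphi$-preimages inside $\Gamma_\theta$. This is handled by assigning each coordinate independently through its own minimal hitting time, so that no consistency conflict can arise; the two facts that make everything go through are $r\ne0$ (allowing division by $r$ to propagate backward) and the nonvanishing of the weights along $O$ (making the base values $x_{\theta_m}$ well defined and the eigenvector genuinely nonzero).
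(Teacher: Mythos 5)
Your proposal is correct and takes essentially the same route as the paper: reduce to the single class $\frac{\theta}{\thicksim}$ via Lemma~\ref{salam30} and exhibit an explicit eigenvector built from weight products along paths into the orbit of $\theta$ (in fact your vector coincides with the paper's, after cancelling the common factor $\mathfrak{w}_{\varphi^{n(\alpha)}(\alpha)}=\mathfrak{w}_{\theta_{m(\alpha)}}$ present in both numerator and denominator of the paper's formula). The only difference is bookkeeping: the paper defines $x_\alpha$ from an arbitrary pair $p,q\geq1$ with $\varphi^p(\alpha)=\varphi^q(\theta)$ and spends most of its proof using the wandering hypothesis to show independence of that choice, whereas you make the choice canonical via the minimal hitting time, using wandering only for uniqueness of $m(\alpha)$ and the relations $n(\varphi(\alpha))=n(\alpha)-1$, $m(\varphi(\alpha))=m(\alpha)$.
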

\begin{proof}
Choose $r\in F\setminus\{0\}$ and
$\theta\in W(\varphi)\setminus \downarrow{\mathfrak Z}$ and $v\in V\setminus\{0\}$. Suppose $\alpha\in
\frac{\theta}{\thicksim}$, then there exist $p,q\geq 1$ with $\varphi^p(\alpha)=\varphi^q(\theta)$. Let
\[x_\alpha:=r^{q-p}(\mathfrak{w}_\alpha\mathfrak{w}_{\varphi(\alpha)}\cdots\mathfrak{w}_{\varphi^p(\alpha)})
(\mathfrak{w}_\theta\mathfrak{w}_{\varphi(\theta)}\cdots\mathfrak{w}_{\varphi^q(\theta)})^{-1} v\:.\]
Note that $x_\alpha$ does not depend on chosen $p,q$, since for $p_1,q_1\geq1$ with
$\varphi^{p_1}(\alpha)=\varphi^{q_1}(\theta)$, we may suppose $p_1\geq p$, then:
\[\varphi^{q_1}(\theta)=\varphi^{p_1}(\alpha)=\varphi^{p_1-p}(\varphi^p(\alpha))=\varphi^{p_1-p}(\varphi^q(\theta))
=\varphi^{p_1-p+q}(\theta)\:.\]
By $\theta\in W(\varphi)$ and $\varphi^{q_1}(\theta)=\varphi^{p_1-p+q}(\theta)$ we conclude
$q_1=p_1-p+q$ and $p-q=p_1-q_1$ in particular $q_1\geq q$, moreover:
\\
$r^{q_1-p_1}(\mathfrak{w}_\alpha\mathfrak{w}_{\varphi(\alpha)}\cdots\mathfrak{w}_{\varphi^{p_1}(\alpha)})
(\mathfrak{w}_\theta\mathfrak{w}_{\varphi(\theta)}\cdots\mathfrak{w}_{\varphi^{q_1}(\theta)})^{-1}$
\begin{eqnarray*}
& = & r^{q-p}(\mathfrak{w}_\alpha\mathfrak{w}_{\varphi(\alpha)}\cdots\mathfrak{w}_{\varphi^{p_1}(\alpha)})
(\mathfrak{w}_\theta\mathfrak{w}_{\varphi(\theta)}\cdots\mathfrak{w}_{\varphi^{q_1}(\theta)})^{-1} \\
& = & r^{q-p}(\mathfrak{w}_\alpha\mathfrak{w}_{\varphi(\alpha)}\cdots\mathfrak{w}_{\varphi^{p}(\alpha)})
(\mathfrak{w}_{\varphi^{p+1}(\alpha)}\cdots\mathfrak{w}_{\varphi^{p_1}(\alpha)})
\\ && \SP\SP\SP\SP\SP\SP\SP
(\mathfrak{w}_{\varphi^{q+1}(\theta)}\cdots \mathfrak{w}_{\varphi^{q_1}(\theta)})^{-1}
(\mathfrak{w}_\theta\mathfrak{w}_{\varphi(\theta)}\cdots\mathfrak{w}_{\varphi^{q}(\theta)})^{-1} \\
& = & r^{q-p}(\mathfrak{w}_\alpha\mathfrak{w}_{\varphi(\alpha)}\cdots\mathfrak{w}_{\varphi^{p}(\alpha)})
(\mathfrak{w}_{\varphi^{q+1}(\theta)}\cdots\mathfrak{w}_{\varphi^{p_1-p+q}(\theta)})
\\ && \SP\SP\SP\SP\SP\SP\SP
(\mathfrak{w}_{\varphi^{q+1}(\theta)}\cdots \mathfrak{w}_{\varphi^{q_1}(\theta)})^{-1}
(\mathfrak{w}_\theta\mathfrak{w}_{\varphi(\theta)}\cdots\mathfrak{w}_{\varphi^{q}(\theta)})^{-1} \\
& = & r^{q-p}(\mathfrak{w}_\alpha\mathfrak{w}_{\varphi(\alpha)}\cdots\mathfrak{w}_{\varphi^{p}(\alpha)})
(\mathfrak{w}_{\varphi^{q+1}(\theta)}\cdots\mathfrak{w}_{\varphi^{q_1}(\theta)})
\\ && \SP\SP\SP\SP\SP\SP\SP
(\mathfrak{w}_{\varphi^{q+1}(\theta)}\cdots \mathfrak{w}_{\varphi^{q_1}(\theta)})^{-1}
(\mathfrak{w}_\theta\mathfrak{w}_{\varphi(\theta)}\cdots\mathfrak{w}_{\varphi^{q}(\theta)})^{-1} \\
& = & r^{q-p}(\mathfrak{w}_\alpha\mathfrak{w}_{\varphi(\alpha)}\cdots\mathfrak{w}_{\varphi^{p}(\alpha)})
(\mathfrak{w}_\theta\mathfrak{w}_{\varphi(\theta)}\cdots\mathfrak{w}_{\varphi^{q}(\theta)})^{-1} \:.
\end{eqnarray*}
Moreover for $\beta\in\frac\theta\thicksim$, if $s,t\geq1$ and $\varphi^s(\beta)=\varphi^t(\theta)$,
then $\varphi^s(\varphi(\beta))=\varphi^{s+1}(\beta)=\varphi^{t+1}(\theta)$ and:
\[x_\beta=r^{t-s}(\mathfrak{w}_\beta\mathfrak{w}_{\varphi(\beta)}\cdots\mathfrak{w}_{\varphi^s(\beta)})
(\mathfrak{w}_\theta\mathfrak{w}_{\varphi(\theta)}\cdots\mathfrak{w}_{\varphi^t(\theta)})^{-1} v\:,\]
\[x_{\varphi(\beta)}=r^{t+1-s}(\mathfrak{w}_{\varphi(\beta)}\mathfrak{w}_{\varphi^2(\beta)}\cdots\mathfrak{w}_{\varphi^{s+1}(\beta)})
(\mathfrak{w}_\theta\mathfrak{w}_{\varphi(\theta)}\cdots\mathfrak{w}_{\varphi^{t+1}(\theta)})^{-1} v\:,\]
thus:
\[\mathfrak{w}_\beta x_{\varphi(\beta)}=r\mathfrak{w}_{\varphi^{s+1}(\beta)}\mathfrak{w}_{\varphi^{t+1}(\theta)}^{-1}x_\beta
=r\mathfrak{w}_{\varphi^{t+1}(\theta)}\mathfrak{w}_{\varphi^{t+1}(\theta)}^{-1}x_\beta=rx_\beta\]
which shows:
\[\sigma_{\varphi\restriction_{\frac\theta\thicksim},\mathfrak{w}^\frac\theta\thicksim}((x_\beta)_{\beta\in\frac\theta\thicksim})
=(\mathfrak{w}_\beta x_{\varphi(\beta)})_{\beta\in\frac\theta\thicksim}=r(x_\beta)_{\beta\in\frac\theta\thicksim}
\]
and $r\in {\rm Eigen}(\sigma_{\varphi\restriction_{\frac\theta\thicksim},\mathfrak{w}^\frac\theta\thicksim},V^\frac\theta\thicksim)$.
By Lemma~\ref{salam30},
 $r\in {\rm Eigen}(\sigma_{\varphi,\mathfrak{w}},V^\Gamma)$
\end{proof}
\begin{remark}[{\cite[Remark 2.6]{soleimani}}]\label{salam50}
For each $\alpha\in\Gamma$ we have:
\begin{itemize}
\item $\alpha\in W(\varphi)$ if and only if $\frac\alpha\thicksim\subseteq W(\varphi)$,
\item $\alpha\in Q(\varphi)$ if and only if $\frac\alpha\thicksim\subseteq Q(\varphi)$.
\end{itemize}
In particular, if $\thicksim=\Gamma\times\Gamma$, then:
\begin{itemize}
\item $W(\varphi)\neq\varnothing$ if and only if $W(\varphi)=\Gamma$,
\item  $Q(\varphi)\neq\varnothing$ if and only if $Q(\varphi)=\Gamma$,
\end{itemize}
\end{remark}
\begin{lemma}\label{salam60}
Let $W(\varphi)\setminus \downarrow{\mathfrak Z}=\varnothing$,
$\Gamma\setminus \downarrow{\mathfrak Z}\neq\varnothing$ and $\thicksim=\Gamma\times\Gamma$, then:
\\
1. $\varnothing \neq P(\varphi)\subseteq \Gamma\setminus \downarrow{\mathfrak Z}\subseteq Q(\varphi)$,
\\
2. for all $\theta\in P(\varphi)$ we have $\{\varphi^n(\theta):n\geq0\}=P(\varphi)$,
\\
3. suppose $\theta\in P(\varphi)$, then:
{\small\[{\rm Eigen}(\sigma_{\varphi,\mathfrak{w}},V^\Gamma)=\left\{\begin{array}{lc}
 \{r\in F\setminus\{0\}: \mathfrak{w}_\theta\cdots\mathfrak{w}_{\varphi^{per(\theta)-1}(\theta)}=r^{per(\theta)}\}\:,
& \varphi(\Gamma\setminus\mathfrak{Z})=\Gamma\:, \\
\{r\in F\setminus\{0\}: \mathfrak{w}_\theta\cdots\mathfrak{w}_{\varphi^{per(\theta)-1}(\theta)}=r^{per(\theta)}\}\cup\{0\}\:,
& otherwise\:. \end{array}\right.\]}
\end{lemma}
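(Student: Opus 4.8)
The plan is to read off the dynamics of $\varphi$ from the three hypotheses, compute the nonzero eigenvalues directly, and dispatch the eigenvalue $0$ via Remark~\ref{salam10}.

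First I would prove (1) and (2). A point is wandering precisely when its forward orbit sequence is injective and quasi-periodic otherwise, so $\Gamma=W(\varphi)\cup Q(\varphi)$ disjointly. Since $\thicksim=\Gamma\times\Gamma$, Remark~\ref{salam50} gives $W(\varphi)\in\{\varnothing,\Gamma\}$; were $W(\varphi)=\Gamma$ we would have $W(\varphi)\setminus\downarrow\mathfrak{Z}=\Gamma\setminus\downarrow\mathfrak{Z}\neq\varnothing$, against the hypothesis, so $W(\varphi)=\varnothing$ and $Q(\varphi)=\Gamma$; in particular $\Gamma\setminus\downarrow\mathfrak{Z}\subseteq Q(\varphi)$. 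For any $\alpha$, quasi-periodicity gives $n>m\geq1$ with $\varphi^n(\alpha)=\varphi^m(\alpha)$, so $\varphi^m(\alpha)\in P(\varphi)$ and $P(\varphi)\neq\varnothing$. If $\theta,\theta'\in P(\varphi)$ then $\theta\thicksim\theta'$ forces their periodic orbits to intersect, hence to coincide, which is exactly (2): $P(\varphi)=\{\varphi^n(\theta):n\geq0\}$ for every $\theta\in P(\varphi)$. Finally, since every $\alpha$ satisfies $\alpha\thicksim\theta$, some forward iterate of $\alpha$ lands on the cycle; so if one cycle point lay in $\mathfrak{Z}$ then, the cycle being forward invariant, every point of $\Gamma$ would lie in $\downarrow\mathfrak{Z}$, contradicting $\Gamma\setminus\downarrow\mathfrak{Z}\neq\varnothing$. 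Thus $P(\varphi)\cap\mathfrak{Z}=\varnothing$, and as cycle points map only to cycle points, $P(\varphi)\subseteq\Gamma\setminus\downarrow\mathfrak{Z}$, completing (1).

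Fix $\theta\in P(\varphi)$ and write $p=per(\theta)$; by (1) the weights $\mathfrak{w}_\theta,\dots,\mathfrak{w}_{\varphi^{p-1}(\theta)}$ are nonzero. For the nonzero eigenvalues, necessity is short: if $\sigma_{\varphi,\mathfrak{w}}(x)=rx$ with $r\neq0$ and $x\neq\mathbf{0}$, choose $x_{\theta_0}\neq0$; from $\mathfrak{w}_\alpha x_{\varphi(\alpha)}=rx_\alpha$ nonvanishing propagates forward, and since $\theta_0$ eventually enters the cycle, $x_{\varphi^j(\theta)}\neq0$ for some $j$. Reading the $\varphi^j(\theta)$-coordinate of $\sigma_{\varphi,\mathfrak{w}}^{p}(x)=r^px$ and using that the product of the weights around the cycle is rotation invariant yields $\mathfrak{w}_\theta\cdots\mathfrak{w}_{\varphi^{p-1}(\theta)}=r^p$.

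The substance is sufficiency: given $r\neq0$ with $\mathfrak{w}_\theta\cdots\mathfrak{w}_{\varphi^{p-1}(\theta)}=r^p$, I would build an eigenvector mimicking Lemma~\ref{salam40}. Put $x_\alpha:=0$ for $\alpha\in\downarrow\mathfrak{Z}$, and for $\alpha\in\Gamma\setminus\downarrow\mathfrak{Z}$ (a forward-invariant set whose orbits avoid $\mathfrak{Z}$) choose $a,b\geq1$ with $\varphi^a(\alpha)=\varphi^b(\theta)$ and set $x_\alpha:=r^{b-a}(\mathfrak{w}_\alpha\cdots\mathfrak{w}_{\varphi^{a-1}(\alpha)})(\mathfrak{w}_\theta\cdots\mathfrak{w}_{\varphi^{b-1}(\theta)})^{-1}v$ for a fixed $v\in V\setminus\{0\}$. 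I expect well-definedness to be the main obstacle: unlike the wandering case of Lemma~\ref{salam40}, here $\theta$ is periodic, so any two admissible pairs are linked (via their inverses) by the moves $(a,b)\mapsto(a+1,b+1)$ and $(a,b)\mapsto(a,b+p)$; the first preserves $x_\alpha$ since $\varphi^a(\alpha)=\varphi^b(\theta)$, and the second multiplies it by $r^p(\mathfrak{w}_\theta\cdots\mathfrak{w}_{\varphi^{p-1}(\theta)})^{-1}=1$, which is precisely where the relation $r^p=\prod$ is spent. Granting this, taking $a\geq2$ gives $\mathfrak{w}_\alpha x_{\varphi(\alpha)}=rx_\alpha$ on $\Gamma\setminus\downarrow\mathfrak{Z}$ by a one-line cancellation, while on $\downarrow\mathfrak{Z}$ we have $rx_\alpha=0=\mathfrak{w}_\alpha x_{\varphi(\alpha)}$ because $\mathfrak{w}_\alpha\neq0$ forces $\varphi(\alpha)\in\downarrow\mathfrak{Z}$; and $x_\theta=v\neq0$, so $x$ is a genuine eigenvector and $r\in{\rm Eigen}(\sigma_{\varphi,\mathfrak{w}},V^\Gamma)$. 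This pins down the nonzero eigenvalues as claimed, and Remark~\ref{salam10} (with $0\in{\rm Eigen}(\sigma_{\varphi,\mathfrak{w}},V^\Gamma)$ iff $\varphi(\Gamma\setminus\mathfrak{Z})\neq\Gamma$) supplies the two cases of (3).
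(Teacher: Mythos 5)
Your proposal is correct and takes essentially the same route as the paper: (1) and (2) are argued the same way, necessity comes from iterating the eigenvalue relation around the (unique) cycle, sufficiency from an explicit eigenvector supported on the paths into the cycle, and $0$ is dispatched by Remark~\ref{salam10}. The only real difference is cosmetic: the paper's eigenvector $y_\alpha:=\rho^{-(s+per(\theta)-t)}\mathfrak{w}_\alpha\cdots\mathfrak{w}_{\varphi^{s+per(\theta)-t-1}(\alpha)}v$ (with $\varphi^{s+per(\theta)-t}(\alpha)=\theta$) uses only forward products of weights, which vanish automatically on $\downarrow\mathfrak{Z}$, so it needs no case split and no weight inverses — one can check it agrees with your vector via the relation $r^{per(\theta)}=\mathfrak{w}_\theta\cdots\mathfrak{w}_{\varphi^{per(\theta)-1}(\theta)}$ — while your generating-moves argument for well-definedness is a cleaner packaging of the paper's long direct computation.
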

\begin{proof} Note that $P(\varphi)\subseteq Q(\varphi)\subseteq \Gamma=W(\varphi)\cup Q(\varphi)$
(one may consider Remark~\ref{salam50}).
\\
{\bf 1)} Since $W(\varphi)\setminus \downarrow{\mathfrak Z}=\varnothing$ we have
$\Gamma\setminus\downarrow{\mathfrak Z}=Q(\varphi)\setminus\downarrow{\mathfrak Z}\subseteq Q(\varphi)$.
\\
Choose $\kappa\in Q(\varphi)\setminus\downarrow{\mathfrak Z}=\Gamma\setminus\downarrow{\mathfrak Z}\neq
\varnothing$, then for each $n\geq1$ we have $\varphi^n(\kappa)\in \Gamma\setminus \downarrow{\mathfrak Z}$.
Since $\kappa\in Q(\varphi)$, there exists $p\geq1$ such that $\varphi^p(\kappa)\in P(\varphi)$, in particular
$P(\varphi)\neq\varnothing$.
\\
Suppose $\theta\in P(\varphi)$, by
$\thicksim=\Gamma\times\Gamma$ we have $\theta\thicksim\kappa$ and there exist $s\geq 1$, $t\in\{0,1,\ldots,per(\theta)-1\}$
with
$\varphi^s(\kappa)=\varphi^t(\theta)$, hence $\theta=\varphi^{per(\theta)}(\theta)=\varphi^{s+ per(\theta)-t}(\kappa)
\in \Gamma\setminus \downarrow{\mathfrak Z}$ which leads to $P(\varphi)\subseteq
\Gamma\setminus \downarrow{\mathfrak Z}$.
\\
{\bf 2)} Consider $\theta,\lambda\in P(\varphi)$, it's evident that $\{\varphi^n(\theta):n\geq0\}\subseteq P(\varphi)$. Since
$\theta\thicksim\lambda$, there exist $s\in\{0,\ldots,per(\lambda)-1\},t\geq1$ with $\varphi^t(\theta)=\varphi^s(\lambda)$. Thus
$\lambda=\varphi^{per(\lambda)}(\lambda)=\varphi^{t+per(\lambda)-s}(\theta)\in \{\varphi^n(\theta):n\geq0\}$. Therefore
$P(\varphi)\subseteq \{\varphi^n(\theta):n\geq0\}$.
\\
{\bf 3)} First suppose $r\in {\rm Eigen}(\sigma_{\varphi,\mathfrak{w}},V^\Gamma)\setminus\{0\}$ and $\theta\in P(\varphi)$, then
there exists $x=(x_\alpha)_{\alpha\in\Gamma}\in V^\Gamma$ such that $x\neq\mathbf{0}$ and $\sigma_{\varphi,
\mathfrak{w}}(x)=rx$.  Choose $\beta\in\Gamma$ with $x_\beta\neq0$, by $\beta\thicksim\theta$ there exist
$s\geq1$, $t\in\{0,\ldots,per(\theta)-1\}$ such that $\varphi^s(\beta)=\varphi^t(\theta)$ hence $\varphi^{s+per(\theta)-t}(\beta)=\varphi^{per(\theta)}(\theta)=\theta$ and:
\begin{eqnarray*}
\sigma_{\varphi,\mathfrak{w}}(x)=rx & \Rightarrow & \sigma_{\varphi,\mathfrak{w}}^{s+per(\theta)-t}(x)=r^{s+per(\theta)-t}x \\
& \Rightarrow & \mathfrak{w}_\beta\cdots\mathfrak{w}_{\varphi^{s+per(\theta)-t-1}(\beta)}
    x_{\varphi^{s+per(\theta)-t}(\beta)}=r^{s+per(\theta)-t}x_\beta \\
& \Rightarrow & \mathfrak{w}_\beta\cdots\mathfrak{w}_{\varphi^{s+per(\theta)-t-1}(\beta)}
    x_\theta=r^{s+per(\theta)-t}x_\beta \neq 0 \\
& \Rightarrow & x_\theta\neq0\SP\SP\SP(\divideontimes)
\end{eqnarray*}
so:
\begin{eqnarray*}
\sigma_{\varphi,\mathfrak{w}}(x)=rx & \Rightarrow & \sigma_{\varphi,\mathfrak{w}}^{per(\theta)}(x)=r^{per(\theta)}x \\
& \Rightarrow & \mathfrak{w}_\theta\cdots\mathfrak{w}_{\varphi^{per(\theta)-1}(\theta)}
    x_{\varphi^{per(\theta)}(\theta)}=r^{per(\theta)}x_\theta \\
& \Rightarrow & \mathfrak{w}_\theta\cdots\mathfrak{w}_{\varphi^{per(\theta)-1}(\theta)} x_\theta=r^{per(\theta)}x_\theta \\
& \mathop{\Rightarrow}\limits^{(\divideontimes)} & \mathfrak{w}_\theta\cdots\mathfrak{w}_{\varphi^{per(\theta)-1}(\theta)}=r^{per(\theta)}\:.
\end{eqnarray*}
Now consider $\rho\in F\setminus\{0\}$ with $\mathfrak{w}_\theta\cdots\mathfrak{w}_{\varphi^{per(\theta)-1}(\theta)}=\rho^{per(\theta)}$ and $v\in V\setminus\{0\}$. For $\alpha\in\Gamma$ there exist
$s\geq1$, $t\in\{0,\ldots,per(\theta)-1\}$ such that $\varphi^s(\alpha)=\varphi^t(\theta)$. Let:
\[y_\alpha:=\rho^{-(s+per(\theta)-t)}\mathfrak{w}_\alpha\cdots\mathfrak{w}_{\varphi^{s+per(\theta)-t-1}(\alpha)}v\:.\]
Note that $y_\alpha$ does not depend on chosen $s\geq1$, $t\in\{0,\ldots,per(\theta)-1\}$, for this aim consider $s_1\geq1$,
$t_1\in\{0,\ldots,per(\theta)-1\}$ with
$\varphi^{s_1}(\alpha)=\varphi^{t_1}(\theta)$. One may suppose $s_1\geq s$. Then
$\varphi^{t_1}(\theta)=\varphi^{s_1}(\alpha)=\varphi^{s_1-s}(\varphi^s(\alpha))=\varphi^{s_1-s+t}(\theta)$
and there exists $k\geq0$ such that $s_1-s+t=t_1+k\:per(\theta)$. Hence:

$\rho^{-(s_1+per(\theta)-t_1)}\mathfrak{w}_\alpha\cdots\mathfrak{w}_{\varphi^{s_1+ per(\theta)-t_1-1}(\alpha)} $
\begin{eqnarray*}
& = &\rho^{-(s_1+per(\theta)-t_1)}\mathfrak{w}_\alpha\cdots\mathfrak{w}_{\varphi^{s+ per(\theta)-t+k\: per(\theta) -1}(\alpha)}
\\ 
& = &
\rho^{-(s_1+per(\theta)-t_1)}(\mathfrak{w}_\alpha\cdots\mathfrak{w}_{\varphi^{s+per(\theta)-t-1}(\alpha)} )
\\ &&
(\mathfrak{w}_{\varphi^{s+per(\theta)-t}(\alpha)}\cdots \mathfrak{w}_{\varphi^{s+per(\theta)-t+per(\theta)-1}(\alpha)})
\\ &&
(\mathfrak{w}_{\varphi^{s+per(\theta)-t+per(\theta)}(\alpha)}\cdots \mathfrak{w}_{\varphi^{s+per(\theta)-t+2\:per(\theta)-1}(\alpha)})
\\ && \vdots \\ & &
(\mathfrak{w}_{\varphi^{s+per(\theta)-t+(k-1)\:per(\theta)}(\alpha)}\cdots \mathfrak{w}_{\varphi^{s+per(\theta)-t+k\:per(\theta)-1}(\alpha)})
\\ & \mathop{=}\limits^{\varphi^s(\alpha)=\varphi^t(\theta)} &
\rho^{-(s_1+per(\theta)-t_1)}(\mathfrak{w}_\alpha\cdots\mathfrak{w}_{\varphi^{s+per(\theta)-t-1}(\alpha)} )
\\ &&
(\mathfrak{w}_{\varphi^{per(\theta)}(\theta)}\cdots \mathfrak{w}_{\varphi^{per(\theta)+per(\theta)-1}(\theta)})
\\ &&
(\mathfrak{w}_{\varphi^{per(\theta)+per(\theta)}(\theta)}\cdots \mathfrak{w}_{\varphi^{per(\theta)+2\:per(\theta)-1}(\theta)})
\\ && \vdots \\ & &
(\mathfrak{w}_{\varphi^{per(\theta)+(k-1)\:per(\theta)}(\theta)}\cdots \mathfrak{w}_{\varphi^{per(\theta)+k\:per(\theta)-1}(\theta)})
\\ & = &\rho^{-(s_1+per(\theta)-t_1)}(\mathfrak{w}_\alpha\cdots\mathfrak{w}_{\varphi^{s+per(\theta)-t-1}(\alpha)} )
(\mathfrak{w}_\theta\cdots\mathfrak{w}_{\varphi^{per(\theta)-1}(\theta)})^k \\
& = & \rho^{-(s_1+per(\theta)-t_1)+k\:per(\theta)}(\mathfrak{w}_\alpha\cdots\mathfrak{w}_{\varphi^{s+per(\theta)-t-1}(\alpha)} ) \\
& = & \rho^{-(s+per(\theta)-t)}\mathfrak{w}_\alpha\cdots\mathfrak{w}_{\varphi^{s+ per(\theta)-t-1}(\alpha)} \:.
\end{eqnarray*}
For $\beta\in\Gamma$ and $p\geq2,q\in\{0,\ldots,per(\theta)-1\}$ with $\varphi^p(\beta)=\varphi^q(\theta)$ we have
$\varphi^{p-1}(\varphi(\beta))=\varphi^{q}(\theta)$ and:
\[y_\beta:=\rho^{-(p+per(\theta)-q)}\mathfrak{w}_\beta\cdots\mathfrak{w}_{\varphi^{p+per(\theta)-q-1}(\beta)}v\:,\]
and
\begin{eqnarray*}
y_{\varphi(\beta)} & = & \rho^{-(p-1+per(\theta)-q)}\mathfrak{w}_{\varphi(\beta)}\cdots\mathfrak{w}_{\varphi^{p-1+per(\theta)-q-1}(\varphi(\beta))}v
\\ & = &
\rho^{-(p-1+per(\theta)-q)}\mathfrak{w}_{\varphi(\beta)}\cdots\mathfrak{w}_{\varphi^{p+per(\theta)-q-1}(\beta)}v \\
& = & \rho \rho^{-(p+per(\theta)-q)}\mathfrak{w}_{\varphi(\beta)}\cdots\mathfrak{w}_{\varphi^{p+per(\theta)-q-1}(\beta)}v
\end{eqnarray*}
Thus $\mathfrak{w}_\beta y_{\varphi(\beta)}=\rho y_\beta$, which shows
$\sigma_{\varphi,\mathfrak{w}}((y_\alpha)_{\alpha\in\Gamma})=\rho(y_\alpha)_{\alpha\in\Gamma}$
and $\rho\in {\rm Eigen}(\sigma_{\varphi,\mathfrak{w}},V^\Gamma)$.
\end{proof}
\begin{corollary}\label{salam65}
If $\mathfrak{g}\in Q(\varphi)\setminus \downarrow{\mathfrak Z}$,
then
$\varnothing \neq P(\varphi_\mathfrak{g})\subseteq \Gamma_\mathfrak{g}\setminus \downarrow{\mathfrak Z}_\mathfrak{g}\subseteq Q(\varphi_\mathfrak{g})$,
and ${\rm Eigen}(\sigma_{\varphi\restriction_{\frac{\mathfrak{g}}{\thicksim}},\mathfrak{w}^{\frac{\mathfrak{g}}{\thicksim}}},V^{\frac{\mathfrak{g}}{\thicksim}})\setminus\{0\}=\{r\in F\setminus\{0\}: \mathfrak{w}_\theta\cdots\mathfrak{w}_{\varphi^{per(\theta)-1}(\theta)}=r^{per(\theta)}\}$ for all $\theta\in P(\varphi_\mathfrak{g})$.
\end{corollary}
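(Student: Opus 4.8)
The plan is to derive both assertions from Lemma~\ref{salam60}, applied not to the ambient system but to the restricted weighted generalized shift $\sigma_{\varphi_\mathfrak{g},\mathfrak{w}^{\Gamma_\mathfrak{g}}}:V^{\Gamma_\mathfrak{g}}\to V^{\Gamma_\mathfrak{g}}$ on the single equivalence class $\Gamma_\mathfrak{g}=\frac{\mathfrak{g}}{\thicksim}$. The entire content of the corollary is that the three hypotheses of Lemma~\ref{salam60} transfer from the global datum $\mathfrak{g}\in Q(\varphi)\setminus\downarrow\mathfrak{Z}$ to this invariant piece; once this is verified, parts 1 and 3 of that lemma give the two conclusions almost verbatim, the passage to $\setminus\{0\}$ conveniently erasing the case distinction in part 3.

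First I would record how the dynamical notions behave under restriction. Since $\Gamma_\mathfrak{g}$ is a $\thicksim$-class it is $\varphi$-invariant (indeed $\alpha\thicksim\varphi(\alpha)$ for every $\alpha$, as $\varphi^2(\alpha)=\varphi^1(\varphi(\alpha))$), so $\varphi_\mathfrak{g}^n=\varphi^n$ on $\Gamma_\mathfrak{g}$ for all $n\geq0$; consequently $W(\varphi_\mathfrak{g})=W(\varphi)\cap\Gamma_\mathfrak{g}$, $Q(\varphi_\mathfrak{g})=Q(\varphi)\cap\Gamma_\mathfrak{g}$, $P(\varphi_\mathfrak{g})=P(\varphi)\cap\Gamma_\mathfrak{g}$, and for $\theta\in P(\varphi_\mathfrak{g})$ both $per(\theta)$ and the product $\mathfrak{w}_\theta\cdots\mathfrak{w}_{\varphi^{per(\theta)-1}(\theta)}$ are unchanged whether computed with $\varphi$ or with $\varphi_\mathfrak{g}$, since the periodic orbit involved never leaves $\Gamma_\mathfrak{g}$. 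I then check the three hypotheses for $\varphi_\mathfrak{g}$: the relation $\thicksim_\mathfrak{g}=\Gamma_\mathfrak{g}\times\Gamma_\mathfrak{g}$ holds because $\Gamma_\mathfrak{g}$ is a single $\thicksim$-class (as recorded in the setup); next, $\mathfrak{g}\in\Gamma_\mathfrak{g}$ together with the hypothesis $\mathfrak{g}\notin\downarrow\mathfrak{Z}$ gives $\mathfrak{g}\notin\downarrow\mathfrak{Z}_\mathfrak{g}=\downarrow\mathfrak{Z}\cap\Gamma_\mathfrak{g}$, so $\Gamma_\mathfrak{g}\setminus\downarrow\mathfrak{Z}_\mathfrak{g}\neq\varnothing$; finally, to obtain $W(\varphi_\mathfrak{g})\setminus\downarrow\mathfrak{Z}_\mathfrak{g}=\varnothing$ it suffices to show $W(\varphi_\mathfrak{g})=\varnothing$, which follows since $\mathfrak{g}\in Q(\varphi)$ gives $\Gamma_\mathfrak{g}\subseteq Q(\varphi)$ by Remark~\ref{salam50}, and $W(\varphi)\cap Q(\varphi)=\varnothing$ forces $W(\varphi_\mathfrak{g})=W(\varphi)\cap\Gamma_\mathfrak{g}=\varnothing$.

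With the hypotheses in place, Lemma~\ref{salam60}(1) for $\varphi_\mathfrak{g}$ yields $\varnothing\neq P(\varphi_\mathfrak{g})\subseteq\Gamma_\mathfrak{g}\setminus\downarrow\mathfrak{Z}_\mathfrak{g}\subseteq Q(\varphi_\mathfrak{g})$, which is the first assertion. For the eigenvalue set, Lemma~\ref{salam60}(3) computes ${\rm Eigen}(\sigma_{\varphi_\mathfrak{g},\mathfrak{w}^{\Gamma_\mathfrak{g}}},V^{\Gamma_\mathfrak{g}})$, for any $\theta\in P(\varphi_\mathfrak{g})$, as $\{r\in F\setminus\{0\}:\mathfrak{w}_\theta\cdots\mathfrak{w}_{\varphi^{per(\theta)-1}(\theta)}=r^{per(\theta)}\}$ in one case and as that same set unioned with $\{0\}$ in the other; removing $0$ collapses both cases to exactly the claimed set, which finishes the proof. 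I expect the only step that is not pure bookkeeping to be the identity $W(\varphi_\mathfrak{g})=\varnothing$: this is precisely where the quasi-periodicity hypothesis $\mathfrak{g}\in Q(\varphi)$ is used, channelled through Remark~\ref{salam50}, and it is what guarantees that the restricted system falls into the regime governed by Lemma~\ref{salam60} rather than that of Lemma~\ref{salam40}.
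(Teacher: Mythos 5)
Your proof is correct and takes essentially the same route as the paper's: both verify the hypotheses of Lemma~\ref{salam60} for the restricted shift $\sigma_{\varphi_\mathfrak{g},\mathfrak{w}^{\Gamma_\mathfrak{g}}}$ --- using Remark~\ref{salam50} to get $\Gamma_\mathfrak{g}\subseteq Q(\varphi)$, hence $W(\varphi_\mathfrak{g})=\varnothing$, and $\mathfrak{g}\notin\downarrow\mathfrak{Z}$ for $\Gamma_\mathfrak{g}\setminus\downarrow\mathfrak{Z}_\mathfrak{g}\neq\varnothing$ --- and then apply parts 1 and 3 of that lemma, discarding $0$ to merge the two cases. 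Your extra bookkeeping (invariance of $\Gamma_\mathfrak{g}$, agreement of iterates, periods, and weight products under restriction) merely makes explicit what the paper leaves implicit.
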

\begin{proof}
Since $\mathfrak{g}\in Q(\varphi)$, so $\Gamma_\mathfrak{g}=\dfrac{\mathfrak g}{\thicksim}\subseteq Q(\varphi)$. Thus
$W(\varphi_\mathfrak{g})\setminus \downarrow{\mathfrak Z}_\mathfrak{g}=\varnothing\setminus \downarrow{\mathfrak Z}=\varnothing$ and
$\mathfrak{g}\in \Gamma_\mathfrak{g}\setminus \downarrow{\mathfrak Z_\mathfrak{g}}\neq\varnothing$ (note that $\thicksim_\mathfrak{g}=\Gamma_\mathfrak{g}\times\Gamma_\mathfrak{g}$). Use Lemma~\ref{salam60} to complete the proof.
\end{proof}
\begin{theorem}[Eigenvalues of weighted generalized shifts]\label{main}
For
\[\mathsf{M}:=\{r\in F\setminus\{0\}: \exists\theta\in P(\varphi)\setminus\downarrow{\mathfrak{Z}}\mathop{\prod}\limits_{0\leq i<per(\theta)} \mathfrak{w}_{\varphi^{i}(\theta)}=r^{per(\theta)}\}\]
we have:
\[{\rm Eigen}(\sigma_{\varphi,\mathfrak{w}},V^\Gamma)=\left\{\begin{array}{lc}
F\setminus\{0\} \:, &  W(\varphi)\not\subseteq \downarrow{\mathfrak Z} ,\Gamma=\varphi(\Gamma\setminus\mathfrak{Z})\:, \\
&\\
F\:, \: & W(\varphi)\not\subseteq\downarrow{\mathfrak Z} ,\Gamma\neq\varphi(\Gamma\setminus\mathfrak{Z})\:, \\
& \\
\mathsf{M}\:, & W(\varphi)\subseteq \downarrow{\mathfrak Z}, \Gamma=\varphi(\Gamma\setminus\mathfrak{Z})\:, \\
&\\
\mathsf{M}\cup\{0\}\:, & W(\varphi)\subseteq\downarrow{\mathfrak Z},\Gamma\neq\varphi(\Gamma\setminus\mathfrak{Z})\:.
\end{array}\right.\]
\end{theorem}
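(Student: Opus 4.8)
The plan is to reduce everything to the equivalence classes $\frac{\alpha}{\thicksim}$ via the decomposition
\[{\rm Eigen}(\sigma_{\varphi,\mathfrak{w}},V^\Gamma)=\bigcup_{\alpha\in\Gamma}{\rm Eigen}(\sigma_{\varphi\restriction_{\frac\alpha\thicksim},\mathfrak{w}^{\frac\alpha\thicksim}},V^{\frac\alpha\thicksim})\]
of Lemma~\ref{salam30}, and to treat the nonzero eigenvalues and the eigenvalue $0$ separately. For the $0$ part, Remark~\ref{salam10} already tells us that $0\in{\rm Eigen}(\sigma_{\varphi,\mathfrak{w}},V^\Gamma)$ exactly when $\Gamma\neq\varphi(\Gamma\setminus\mathfrak{Z})$, which accounts for the alternation between the two columns of each pair of cases; after removing $0$ the display above gives $\,{\rm Eigen}(\sigma_{\varphi,\mathfrak{w}},V^\Gamma)\setminus\{0\}=\bigcup_{\alpha\in\Gamma}\bigl({\rm Eigen}(\sigma_{\varphi\restriction_{\frac\alpha\thicksim},\mathfrak{w}^{\frac\alpha\thicksim}},V^{\frac\alpha\thicksim})\setminus\{0\}\bigr)$.

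First I would classify each class by type using Remark~\ref{salam50}: since $\Gamma=W(\varphi)\cup Q(\varphi)$, every $\frac\alpha\thicksim$ lies entirely in $W(\varphi)$ or entirely in $Q(\varphi)$. For a class $\frac\alpha\thicksim\subseteq\downarrow\mathfrak{Z}$, Corollary~\ref{salam25} gives no nonzero eigenvalues. For a class not contained in $\downarrow\mathfrak{Z}$ there are two cases. If $\frac\alpha\thicksim\subseteq W(\varphi)$, then the restricted shift on $\frac\alpha\thicksim$ satisfies the hypothesis of Lemma~\ref{salam40} (its wandering set is not contained in $\downarrow\mathfrak{Z}$), so the class contributes all of $F\setminus\{0\}$. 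If instead $\frac\alpha\thicksim\subseteq Q(\varphi)$ and meets $\Gamma\setminus\downarrow\mathfrak{Z}$, then choosing $\mathfrak{g}$ in $\frac\alpha\thicksim\cap(Q(\varphi)\setminus\downarrow\mathfrak{Z})$ and applying Corollary~\ref{salam65} shows the class contributes exactly $\{r\in F\setminus\{0\}:\prod_{0\le i<per(\theta)}\mathfrak{w}_{\varphi^i(\theta)}=r^{per(\theta)}\}$ for any $\theta\in P(\varphi_\mathfrak{g})$.

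Next I would assemble these contributions. The wandering case above occurs for some class precisely when $W(\varphi)\not\subseteq\downarrow\mathfrak{Z}$: a wandering point outside $\downarrow\mathfrak{Z}$ generates, by Remark~\ref{salam50}, a whole wandering class not contained in $\downarrow\mathfrak{Z}$, and conversely. In that case the union of nonzero contributions is already all of $F\setminus\{0\}$, absorbing $\mathsf{M}$, which yields the top two rows. When $W(\varphi)\subseteq\downarrow\mathfrak{Z}$, only quasi-periodic classes meeting $\Gamma\setminus\downarrow\mathfrak{Z}$ contribute, and the remaining task is to identify the union of their contributions with $\mathsf{M}$. The key point is the correspondence between such classes and the periodic points in $P(\varphi)\setminus\downarrow\mathfrak{Z}$: by Lemma~\ref{salam60}(1) each such class contains periodic points, all lying in $\Gamma\setminus\downarrow\mathfrak{Z}$; conversely each $\theta\in P(\varphi)\setminus\downarrow\mathfrak{Z}$ is quasi-periodic with $\frac\theta\thicksim\not\subseteq\downarrow\mathfrak{Z}$. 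Moreover, by Lemma~\ref{salam60}(2) the periodic points of one class form a single cycle, so both $per(\theta)$ and the product $\prod_{0\le i<per(\theta)}\mathfrak{w}_{\varphi^i(\theta)}$ are independent of the chosen periodic representative, making each class's contribution well-defined and equal to the corresponding piece of $\mathsf{M}$.

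I expect the main obstacle to be this last bookkeeping step: verifying cleanly that the per-class quasi-periodic contributions, taken over all qualifying classes, reassemble to exactly $\mathsf{M}$ with no double counting and no omission, and in particular that the defining product in $\mathsf{M}$ is genuinely an invariant of the class (independent of the chosen periodic representative). Once that cycle-invariance and the class/periodic-point correspondence are in place, combining the wandering-class dichotomy with the Remark~\ref{salam10} criterion for $0$ produces the four cases directly.
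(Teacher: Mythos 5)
Your proposal is correct and follows essentially the same route as the paper's own proof: decomposition via Lemma~\ref{salam30}, the $0$-eigenvalue criterion of Remark~\ref{salam10}, Corollary~\ref{salam25} for classes inside $\downarrow\mathfrak{Z}$, Corollary~\ref{salam65} (with Lemma~\ref{salam60}) for quasi-periodic classes meeting $\Gamma\setminus\downarrow\mathfrak{Z}$, and the wandering case via Lemma~\ref{salam40}. The only cosmetic difference is that you invoke Lemma~\ref{salam40} per class while the paper applies it globally, and the ``bookkeeping'' step you flag as the main obstacle is already dispatched by Corollary~\ref{salam65} holding for \emph{all} $\theta\in P(\varphi_\mathfrak{g})$, exactly as in the paper's chain of equalities identifying the union with $\mathsf{M}$.
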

\begin{proof}
First suppose $W(\varphi)\not\subseteq \downarrow{\mathfrak Z}$, then by Lemma~\ref{salam40},
${\rm Eigen}(\sigma_{\varphi,\mathfrak{w}},V^\Gamma)=F\setminus\{0\}$ if
$\Gamma=\varphi(\Gamma\setminus\mathfrak{Z})$ and ${\rm Eigen}(\sigma_{\varphi,\mathfrak{w}},V^\Gamma)=F$ otherwise.
\\
Now suppose $W(\varphi)\subseteq \downarrow{\mathfrak Z}$. For $\theta\in\Gamma$ we
have the following cases:
\\
$\bullet$ $\theta\in W(\varphi)\subseteq \downarrow{\mathfrak Z}$. In this case by Corollary~\ref{salam25},
${\rm Eigen}(\sigma_{\varphi\restriction_{\frac{\theta}{\thicksim}},\mathfrak{w}^{\frac{\theta}{\thicksim}}},V^{\frac{\theta}{\thicksim}})\setminus\{0\}=\varnothing$.
\\
$\bullet$ $\theta\in Q(\varphi)$ and $\frac\theta\thicksim\subseteq\downarrow\mathfrak{Z}$. Again in  this case by Corollary~\ref{salam25},
${\rm Eigen}(\sigma_{\varphi\restriction_{\frac{\theta}{\thicksim}},\mathfrak{w}^{\frac{\theta}{\thicksim}}},V^{\frac{\theta}{\thicksim}})\setminus\{0\}=\varnothing$.
\\
$\bullet$ $\theta\in Q(\varphi)$ and $\frac\theta\thicksim\setminus\downarrow\mathfrak{Z}\neq\varnothing$. In  this case by Corollary~\ref{salam65}, $P(\varphi)\cap\frac\theta\thicksim\cap\downarrow\mathfrak{Z}=\varnothing$ and
${\rm Eigen}(\sigma_{\varphi\restriction_{\frac{\theta}{\thicksim}},\mathfrak{w}^{\frac{\theta}{\thicksim}}},V^{\frac{\theta}{\thicksim}})\setminus\{0\}=\{r\in F\setminus\{0\}: \mathfrak{w}_\theta\cdots\mathfrak{w}_{\varphi^{per(\theta)-1}(\theta)}=r^{per(\theta)}\}$ for each
$\theta\in P(\varphi)\cap\frac\theta\thicksim$
\\
On the other hand by Corollary~\ref{salam65}, for $\theta\in Q(\varphi)$ we have
$\frac\theta\thicksim\not\subseteq\downarrow\mathfrak{Z}$ if and only if $P(\varphi)\cap \frac\theta\thicksim\not\subseteq\downarrow\mathfrak{Z}$. Thus by Lemma~\ref{salam30} and above cases:

${\rm Eigen}(\sigma_{\varphi,\mathfrak{w}},V^\Gamma)\setminus\{0\}$
\begin{eqnarray*}
&=&\bigcup\{{\rm Eigen}(\sigma_{\varphi\restriction_{\frac\alpha\thicksim},\mathfrak{w}^{\frac\alpha\thicksim}},V^{\frac\alpha\thicksim}):\alpha\in\Gamma\}\setminus\{0\} \\
&=&\bigcup\{{\rm Eigen}(\sigma_{\varphi\restriction_{\frac\alpha\thicksim},\mathfrak{w}^{\frac\alpha\thicksim}},V^{\frac\alpha\thicksim}):\alpha\in Q(\varphi),
\frac\alpha\thicksim\setminus\downarrow\mathfrak{Z}\neq\varnothing\}\setminus\{0\} \\
&=&\bigcup\{{\rm Eigen}(\sigma_{\varphi\restriction_{\frac\alpha\thicksim},\mathfrak{w}^{\frac\alpha\thicksim}},V^{\frac\alpha\thicksim}):\alpha\in P(\varphi),
\frac\alpha\thicksim\setminus\downarrow\mathfrak{Z}\neq\varnothing\}\setminus\{0\}\\
&=&\bigcup\{{\rm Eigen}(\sigma_{\varphi\restriction_{\frac\alpha\thicksim},\mathfrak{w}^{\frac\alpha\thicksim}},V^{\frac\alpha\thicksim}):\alpha\in P(\varphi)\setminus\downarrow\mathfrak{Z}\}\setminus\{0\} \\
& = & \bigcup\{
\{r\in F\setminus\{0\}: \mathfrak{w}_\alpha\cdots\mathfrak{w}_{\varphi^{per(\alpha)-1}(\alpha)}=r^{per(\alpha)}\}:\alpha\in P(\varphi)\setminus\downarrow\mathfrak{Z}\} \\
& = &
\{r\in F\setminus\{0\}: \exists \alpha\in P(\varphi)\setminus\downarrow\mathfrak{Z}\:\:\:\: \mathfrak{w}_\alpha\cdots\mathfrak{w}_{\varphi^{per(\alpha)-1}(\alpha)}=r^{per(\alpha)}\} \\
& = & \mathsf{M}
\end{eqnarray*}
Use  Remark~\ref{salam10} to complete the proof.
\end{proof}
\begin{remark}
If $\sigma_{\varphi,\mathfrak{w}}:V^\Gamma\to V^\Gamma$ is isomorphism, then it is
onto and $\mathfrak{Z}=\downarrow\mathfrak{Z}=\varnothing$. So by
 Theorem~\ref{main} and Remark~\ref{salam10}:
{\small \[{\rm Eigen}(\sigma_{\varphi,\mathfrak{w}},V^\Gamma)=\left\{\begin{array}{lc} 
F\setminus\{0\} \:, &  W(\varphi)\neq\varnothing \:, \\
& \\
\{r\in F\setminus\{0\}: \exists\theta\in P(\varphi)\mathop{\prod}\limits_{0\leq i<per(\theta)} \mathfrak{w}_{\varphi^{i}(\theta)}=r^{per(\theta)}\}\:, \: & otherwise\:. 
\end{array}\right.\]}
\end{remark}
\begin{corollary}[Eigenvalues of generalized shifts]
We have:
{\small\[{\rm Eigen}(\sigma_\varphi,V^\Gamma)=\left\{\begin{array}{lc}
\{r\in F\setminus\{0\}: \exists\theta\in P(\varphi)\:\: o(r)|per(\theta)\}\:, & W(\varphi)=\varnothing, \Gamma=\varphi(\Gamma)\:, \\
& \\
\{r\in F\setminus\{0\}: \exists\theta\in P(\varphi)\:\: o(r)|per(\theta)\}\cup\{0\}\:, & W(\varphi)=\varnothing, \Gamma\neq\varphi(\Gamma)\:, \\
& \\
F\setminus\{0\} \:, &  W(\varphi)\neq\varnothing ,\Gamma=\varphi(\Gamma)\:, \\
& \\
F\:, \: & W(\varphi)\neq\varnothing ,\Gamma\neq\varphi(\Gamma)\:.

\end{array}\right.\]}
where for $r\in F\setminus\{0\}$, $o(r)$ denotes the order of $r$ in multiplicative group $F\setminus\{0\}$.
\end{corollary}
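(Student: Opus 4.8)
The plan is to derive this corollary directly from Theorem~\ref{main} by specializing to the constant weight vector. Observe that the generalized shift $\sigma_\varphi$ is precisely the weighted generalized shift $\sigma_{\varphi,\mathfrak{w}}$ with $\mathfrak{w}=(1)_{\alpha\in\Gamma}$, where $1$ denotes the unit of $F$. So I would begin by instantiating all the data appearing in Theorem~\ref{main} with this choice of $\mathfrak{w}$ and simplifying.

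First I would compute the auxiliary sets. Since $1\neq0$ in the field $F$, no weight vanishes, so $\mathfrak{Z}=\{\alpha\in\Gamma:\mathfrak{w}_\alpha=0\}=\varnothing$, hence $\downarrow\mathfrak{Z}=\varnothing$ as well, and $\varphi(\Gamma\setminus\mathfrak{Z})=\varphi(\Gamma)$. Consequently the side conditions of Theorem~\ref{main} collapse: $W(\varphi)\subseteq\downarrow\mathfrak{Z}$ becomes $W(\varphi)=\varnothing$, $W(\varphi)\not\subseteq\downarrow\mathfrak{Z}$ becomes $W(\varphi)\neq\varnothing$, and $\Gamma=\varphi(\Gamma\setminus\mathfrak{Z})$ becomes $\Gamma=\varphi(\Gamma)$. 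This already reproduces the four-way case split of the corollary, and the two $W(\varphi)\neq\varnothing$ rows of Theorem~\ref{main} give $F\setminus\{0\}$ and $F$ exactly as claimed.

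Next I would simplify the set $\mathsf{M}$. For each periodic $\theta$ every factor $\mathfrak{w}_{\varphi^i(\theta)}$ equals $1$, so the product $\prod_{0\leq i<per(\theta)}\mathfrak{w}_{\varphi^i(\theta)}$ equals $1$; moreover, since $\downarrow\mathfrak{Z}=\varnothing$, the membership $\theta\in P(\varphi)\setminus\downarrow\mathfrak{Z}$ reduces to $\theta\in P(\varphi)$. Thus $\mathsf{M}=\{r\in F\setminus\{0\}:\exists\theta\in P(\varphi)\ \ r^{per(\theta)}=1\}$.

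The only remaining point is to rewrite the equation $r^{per(\theta)}=1$ in terms of the order $o(r)$ of $r$ in the multiplicative group $F\setminus\{0\}$. This is the standard group-theoretic fact that for an element $r$ of finite order, $r^n=1$ holds if and only if $o(r)\mid n$ (and if $r$ has infinite order no such $n\geq1$ exists). Applying this with $n=per(\theta)$ turns $\mathsf{M}$ into $\{r\in F\setminus\{0\}:\exists\theta\in P(\varphi)\ \ o(r)\mid per(\theta)\}$, which is exactly the set appearing in the first two rows of the corollary. Substituting these identifications into the four cases of Theorem~\ref{main} then yields the claimed formula. I do not expect any genuine obstacle here: the argument is a routine specialization, and the single substantive step is the elementary translation between $r^{per(\theta)}=1$ and $o(r)\mid per(\theta)$.
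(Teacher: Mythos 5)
Your proposal is correct and is exactly the paper's proof: the paper's argument is precisely to specialize Theorem~\ref{main} to $\mathfrak{w}=(1)_{\alpha\in\Gamma}$, and your spelled-out details (that $\mathfrak{Z}=\downarrow\mathfrak{Z}=\varnothing$ collapses the case conditions, and that $\mathsf{M}$ reduces via $r^{per(\theta)}=1\Leftrightarrow o(r)\mid per(\theta)$) are the routine verifications the paper leaves implicit.
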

\begin{proof}
Use Theorem~\ref{main} and the fact that $\sigma_\varphi=\sigma_{\varphi,\mathfrak{u}}$ for
$\mathfrak{u}=(1)_{\alpha\in\Gamma}$.
\end{proof}


\noindent \noindent{\small {\bf Safoura Arzanesh}, Faculty
of Mathematics, Statistics and Computer Science, College of
Science, University of Tehran, Enghelab Ave., Tehran, Iran
 ({\it e-mail}: arzanesh.parsian@gmail.com)}
\\ \\
{\small {\bf Fatemah Ayatollah Zadeh Shirazi}, Faculty
of Mathematics, Statistics and Computer Science, College of
Science, University of Tehran, Enghelab Ave., Tehran, Iran
\linebreak ({\it e-mail}: f.a.z.shirazi@ut.ac.ir,
fatemah@khayam.ut.ac.ir)}
\\ \\
{\small {\bf Arezoo Hosseini},
Faculty of Mathematics, College of Science, Farhangian University, Pardis Nasibe--shahid sherafat, Enghelab Ave., Tehran, Iran
({\it e-mail}: a.hosseini@cfu.ac.ir)}
\\ \\
{\small {\bf Reza Rezavand}, School of Mathematics, Statistics
and Computer Science, College of Science, University of Tehran,
Enghelab Ave., Tehran, Iran ({\it e-mail}: rezavand@ut.ac.ir)}

\end{document}